\def\QQ{{\mathbb Q}}
\def\ZZ{{\mathbb Z}}
\def\RR{{\mathbb R}}
\def\CC{{\mathbb C}}
\def\BB{{\mathbf B}}
\newtheorem{thm}{Theorem}
\newtheorem{prop}[thm]{Proposition}
\newtheorem{lemma}[thm]{Lemma}
\newtheorem{remark}[thm]{Remark}
\newtheorem{example}[thm]{Example}
\newtheorem{definition}[thm]{Definition}
\begin{document}

\title[Homogeneous Fourier Matrices]{Classification of Homogeneous Fourier Matrices}

\author[Gurmail Singh]{Gurmail Singh}
\address{Department of Mathematics and Statistics, University of Regina, Regina, Canada, S4S 0A2}
\email{singh28g@uregina.ca}

\date{}

\keywords{Integral modular data, Fusion rings, $C$-algebra, Reality-based algebra.}

\subjclass[2000]{Primary 05E30, Secondary , 05E99, 81R05}

\begin{abstract} Modular data are commonly studied in mathematics and physics.
A modular datum defines a finite-dimensional representation of the modular group $SL_2(\ZZ)$. In this paper, we show that there is a one-to-one correspondence between  Fourier matrices associated to modular data and self-dual $C$-algebras that satisfy a certain condition. Also, we prove that a homogenous $C$-algebra arising from a Fourier matrix has all the degrees equal to $1$.
\end{abstract}

\maketitle

\section{Introduction}

The set of columns of a Fourier matrix with entrywise multiplication  and usual addition  generate a fusion algebra over $\CC$, see \cite{MC1}.
But we apply a two-step rescaling on Fourier matrices to obtain the $C$-algebras that have nicer properties than the fusion algebras. To establish the results of this paper we use the   properties of Fourier matrices and associated $C$-algebras.

\smallskip

Let $A$ be a finite dimensional and commutative algebra over $\CC$ with  distinguished basis  $\mathbf{B} = \{b_0:=1_A, b_1, \hdots, b_{r-1}\}$, and an $\RR$-linear and $\CC$-conjugate linear involution $*:A \rightarrow A$. Let $\delta: A \rightarrow \CC$ be an algebra homomorphism. Then the triple $(A,\BB,\delta)$ is called a \emph{$C$-algebra} if  it satisfies the following properties:

\begin{enumerate}

\item for all $b_i \in \mathbf{B}$, $(b_i)^* = b_{i^*} \in \mathbf{B}$,
\item 
    for all $b_i, b_j \in \mathbf{B}$, we have
$b_ib_j = \sum_{b_k \in \mathbf{B}}^{}\lambda_{ijk}b_k, \mbox{ for some } \lambda_{ijk} \in \mathbb{R},$
\item  for all $b_i, b_j \in \mathbf{B}, \lambda_{ij0} \neq 0 \iff j = i^*$,
\item for all $b_i \in \mathbf{B}, \lambda_{ii^*0} = \lambda_{i^*i0} > 0$.
\item for all $b_i\in \BB$, $\delta(b_i)=\delta(b_{i^*})>0$.
\end{enumerate}

The algebra homomorphism $\delta$ is called a \emph{degree map}, and $\delta(b_i)$, for all $b_i\in \BB$, are  called the degrees.
For $i\neq 0$, $\delta(b_i)$ is called a \emph{nontrivial degree}. 
If  $\delta(b_i)=\lambda_{ii^*0}$, for all $b_i\in \BB$, we say  that $\BB$ is a  standard basis. The order of a $C$-algebra is denfined as  $n := \delta(\mathbf{B}^+) = \sum_{i=0}^{r-1} \delta(b_i)$.
A $C$-algebra is called \emph{symmetric} if $b_{i^*}=b_i$, for all $i$.
A $C$-algebra with rational structure constants is called a \emph{rational $C$-algebra}.  
The readers interested in $C$-algebras are directed to \cite{AFM}, \cite{HIB1}, \cite{HIB2},  \cite{HS2} and \cite{Hig87}.
%

%
%

\smallskip

In Section 2, we construct the $C$-algebras from Fourier matrices. 
In Section 3,  we establish some  properties of $C$-algebras that arise from Fourier matrices. Also, we prove that there is a one-to-one correspondence between Fourier matrices and  self-dual $C$-algebras that satisfy a certain integrality condition.
In Section 4, we prove that every  homogeneous $C$-algebra arising from a Fourier matrix has all the degrees equal to $1$, and such a $C$-algebra is a group algebra under a certain condition.

\section{$C$-algebras arising from Fourier matrices}

In this section, we introduce the  two-step rescaling on Fourier matrices associated to modular data and the construct $C$-algebras.
To keep the generality, in the following definition of Modular data we assume the structure constants to be integers instead of nonnegative integers, see  \cite{MC1}.

\begin{definition}\label{ModularDef} Let $r\in \ZZ^+$ and  $I$ an $r \times r$ identity matrix. A pair $(S,T)$ of $r\times r$ complex matrices is called modular datum if
\begin{enumerate}
\item $S$ is a unitary and symmetric matrix, that is, $S\bar{S}^T = 1, S= S^T$,
\item  $T$ is diagonal matrix and of finite multiplicative order,
\item $S_{i0}>0$, for $0\leq i\leq r-1$, where $S$ is indexed by $\{0,1,2,\dots,r-1\}$,
\item $(ST)^3=S^2$,
\item $N_{ijk}= \sum_l{S_{li}S_{lj}\bar S_{lk}}{S^{-1}_{l0}} \in \ZZ$, for all $0\leq i,j,k\leq r-1$.
\end{enumerate}
 \end{definition}

\begin{definition}
A matrix $S$ satisfying the axioms $(i), (iii)$ and $(v)$ of Definition \ref{ModularDef} is called a \emph{Fourier matrix}.
\end{definition}

Let $S$ be a Fourier matrix. Let $s=[s_{ij}]$ be the  matrix with entries $s_{ij}=S_{ij}/S_{i0}$, for all $i,j$, and we call it an \emph{$s$-matrix associated to $S$} (briefly, $s$-matrix). Since $S$ is a unitary matrix, $s\bar s^T$=diag$(d_0,d_1,\dots, d_{r-1})$ is a diagonal matrix, where $d_i=\sum_j s_{ij}\bar s_{ij}$. The numbers $d_i$ are called norms of $s$-matrix. The \emph{principal norm} $d_0~ (= S^{-2}_{00})$ is also known as the \emph{size of the modular datum}, see \cite[Definition 3.8]{MC1}.  The relation $s_{ij}=S_{ij}/S_{i0}$ implies the structure constants $N_{ijk}=\sum_ls_{li}s_{lj}s_{lk}d^{-1}_l$, for all $i,j,k$. Since the  structure constants $N_{ijk}$ generated by the columns of $S$ under entrywise multiplication are integers, the numbers $S_{ij}/S_{i0}$ are algebraic integers, see \cite[Section 3]{MC1}. Therefore, if $S$ has only rational entries then the entries of $s$-matrix are rational integers, and such $s$-matrices are known as \emph{integral Fourier matrices}, see \cite[Definition 3.1]{MC1}. Cuntz studied the integral Fourier matrices, see \cite{MC1}.
In this paper,
we consider the broader class of $s$-matrices with algebraic integer entries.
Note that an $s$-matrix satisfies all the axioms of the definition of an integral Fourier matrix except the entries may not be integers. For example, the first eigenmatrix (character table) of every group algebra of a group of prime order is an $s$-matrix but not an integral Fourier matrix.

\smallskip

There is an interesting row-and-column operation (\emph{two-step rescaling}) procedure that can be applied to a Fourier matrix $S$ that results in the first eigenmatrix, the character table,  of a
$C$-algebra, see Theorem \ref{C-Algebra}.
The steps of the procedure are reversed to obtain the Fourier matrix $S$ from  the first eigenmatrix. The explanation of the procedure is as follows. Let $S=[S_{ij}]$ be a Fourier matrix indexed with $\{0,1,\dots, r-1\}$. We divide each row of $S$ with its first entry and obtain the $s$-matrix. The multiplication of each column of the $s$-matrix with its first entry gives the \emph{$P$-matrix associated to $S$} (briefly, $P$-matrix),  the first eigenmatrix of the $C$-algebra. That is, $s_{ij}=S_{ij}S^{-1}_{i0}$ and $p_{ij}=s_{ij}s_{0j}$, for all $i,j$, where $p_{ij}$ denotes the $(i,j)$-entry of the $P$-matrix. Conversely, to obtain the $s$-matrix from a $P$-matrix, divide each column of the $P$-matrix with the squareroot of its first entry.
Further, the Fourier matrix $S$ is obtained from the $s$-matrix by dividing the  $i$th row of $s$-matrix by $\sqrt{d_i}$, where $d_i=\sum_j|s_{ij}|^2$.
That is, $s_{ij}=p_{ij}/\sqrt{p_{0j}}$, and $S_{ij}=s_{ij}/\sqrt{d_i}$,  $\forall ~i,j$. Since the entries of an $s$-matrix are algebraic integers, the entries of $P$-matrix are also algebraic integers.

\begin{remark}\rm Throughout this paper, unless mentioned explicitly, the set of the columns of $P$-matrix and $s$-matrix are denoted by $\BB=\{b_0,b_1,\hdots, b_{r-1}\}$ and $\tilde \BB=\{\tilde b_0, \tilde b_1,\hdots, \tilde b_{r-1}\}$, respectively. The structure constants generated by the columns, with the entrywise multiplication, of $P$-matrix and $s$-matrix are denoted by $\lambda_{ijk}$ and $N_{ijk}$, respectively. $M^T$ denotes the transpose of a matrix $M$.
\end{remark}

The columns of a Fourier matrix gives rise to a fusion algebra, see \cite{MC1}. In the next theorem we show that corresponding to every Fourier matrix there is a $C$-algebra.

\begin{thm}\label{C-Algebra}Let $S$ be a Fourier matrix.
Then the vector space $A:=\CC\BB$ is a $C$-algebra of order $d_0$, $\BB$ is the standard basis of $A$, and $P$-matrix is the first eigenmatrix of $A$.
\end{thm}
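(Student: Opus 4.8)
The plan is to realize $A$ concretely as the algebra $\CC^r$ of functions on the index set $\{0,1,\dots,r-1\}$ under entrywise multiplication, in which the columns of the $P$-matrix are the distinguished basis $\BB$. First I would record the normalizations coming from the hypotheses: since $S$ is symmetric with $S_{i0}>0$, the whole zeroth row is real and positive, $s_{0j}=S_{0j}/S_{00}=S_{j0}/S_{00}>0$; the zeroth column of the $s$-matrix is the all-ones vector, so $b_0$ is the constant vector $(1,\dots,1)$ and hence the identity for entrywise multiplication. A short computation using the unitarity of $S$, which gives the orthogonality relations $\sum_l \bar s_{lk}s_{lm}d_l^{-1}=\delta_{km}$ and $\sum_k s_{lk}\bar s_{mk}=d_l\delta_{lm}$, shows that the $s$-columns are closed under entrywise multiplication with the integer constants $N_{ijk}$, and rescaling by $b_j=s_{0j}\tilde b_j$ gives closure of the $P$-columns with $\lambda_{ijk}=\dfrac{s_{0i}s_{0j}}{s_{0k}}N_{ijk}$. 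Because every $s_{0m}$ is a positive real and every $N_{ijk}$ is an integer, these $\lambda_{ijk}$ are real, settling axiom (2). I would take the degree map to be evaluation at the zeroth row, $\delta(b_j)=p_{0j}=s_{0j}^2$, a positive-real-valued algebra homomorphism with $\delta(b_0)=1$, giving axiom (5) and $\delta(\BB^+)=\sum_j s_{0j}^2=\sum_j|s_{0j}|^2=d_0$, the claimed order.

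The crux, and the step I expect to be the main obstacle, is producing the involution, i.e.\ showing that complex conjugation permutes the basis. Using the bar-orthogonality relation, I would solve for the coefficients expressing the conjugate of an $s$-column in the basis and find $\overline{\tilde b_j}=\sum_m N_{jm0}\tilde b_m$; equivalently $\bar s=s\,\mathcal N^{T}$ with $\mathcal N:=[N_{jm0}]$. Writing $s=D^{1/2}S$ with $D=\mathrm{diag}(d_l)$ and using $\bar S=S^{-1}$ (from $S\bar S^{T}=I$ and $S=S^{T}$) collapses this to $\mathcal N=S^{-2}$. Thus $\mathcal N$ is simultaneously an integer matrix and a real orthogonal matrix, hence a signed permutation matrix, and since conjugation is involutive one has $\mathcal N^{2}=I$. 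Evaluating $\overline{\tilde b_j}=\sum_m N_{jm0}\tilde b_m$ at the zeroth row gives $s_{0j}=\sum_m N_{jm0}\,s_{0m}$ with all $s_{0m}>0$, which forces the unique nonzero entry in each row of $\mathcal N$ to be $+1$; hence $\mathcal N=S^{2}$ is an honest permutation matrix defining an involution $j\mapsto j^{*}$ with $N_{ij0}=\delta_{j,i^{*}}$ and $s_{0,i^{*}}=s_{0i}$. Declaring $b_i^{*}:=\overline{b_i}$, the identity $\overline{p_{li}}=s_{0i}\bar s_{li}=s_{0,i^{*}}s_{l,i^{*}}=p_{l,i^{*}}$ shows $b_i^{*}=b_{i^{*}}\in\BB$, which is axiom (1).

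With the involution in hand the remaining axioms are direct. Axiom (3) follows from $\lambda_{ij0}=s_{0i}s_{0j}N_{ij0}$ (using $s_{00}=1$) together with $N_{ij0}=\delta_{j,i^{*}}$ and $s_{0i}s_{0j}>0$; axiom (4) gives $\lambda_{ii^{*}0}=\lambda_{i^{*}i0}=s_{0i}s_{0,i^{*}}=s_{0i}^{2}>0$. Comparing with $\delta(b_i)=s_{0i}^{2}$ shows $\delta(b_i)=\lambda_{ii^{*}0}$, so $\BB$ is a standard basis. Finally, since $s$ is invertible and each $s_{0j}>0$, the matrix $P$ is invertible; its rows therefore furnish $r$ distinct algebra homomorphisms $A\to\CC$ (evaluation at each index), which are all the irreducible characters of the commutative algebra $A\cong\CC^{r}$, with the zeroth row equal to $\delta$. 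Hence $P$ is the first eigenmatrix of $A$, completing the proof.
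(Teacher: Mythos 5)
Your proof is correct, and although it necessarily follows the paper's overall construction (same two-step rescaling, same structure constants $\lambda_{ijk}=N_{ijk}\,s_{0i}s_{0j}/s_{0k}$, same degree values $\delta(b_j)=s_{0j}^2=p_{0j}$), it differs genuinely at the two places where something actually has to be verified. First, the involution: the paper simply \emph{asserts} that entrywise conjugation permutes the columns, declaring $(S_{ij})^*=S_{ij^*}=\bar S_{ij}$ with no justification, and then uses this assumed $j\mapsto j^*$ to compute $N_{ij0}$. You supply the missing argument: expanding $\overline{\tilde b_j}=\sum_m N_{jm0}\tilde b_m$, identifying $\mathcal{N}=[N_{jm0}]=S^{-2}=S^{2}$ as an integer unitary matrix, concluding it is a signed permutation matrix, and using positivity of the zeroth row of the $s$-matrix to force all signs to be $+1$. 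This is exactly the right argument in this paper's setting, where the $N_{ijk}$ are only assumed to be integers rather than nonnegative integers, so one cannot invoke nonnegativity-based charge-conjugation arguments; your derivation yields $N_{ij0}=\delta_{j,i^*}$, $N_{ii^*0}=1$, and $s_{0,i^*}=s_{0i}$, which are precisely the facts the paper's computation of $\lambda_{ij0}$ and of axiom (5) quietly relies on, and your observation $\mathcal{N}^2=I$ confirms $*$ is an involution on indices. Second, the degree map: the paper defines $\delta$ by a (somewhat awkwardly conjugate-linear) formula on $\tilde\BB$ and verifies multiplicativity by an explicit row-orthogonality computation, whereas you realize $A$ concretely as $\CC^r$ under entrywise multiplication and take $\delta$ to be evaluation at the zeroth coordinate, which is a homomorphism for free; the same concrete realization, together with invertibility of $P$, makes the identification of the rows of $P$ with the $r$ irreducible characters (hence of $P$ with the first eigenmatrix) immediate. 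In short, your route buys a self-contained proof of the one nontrivial closure fact the paper leaves implicit, at the modest cost of a longer setup; the paper's version is shorter only because it takes that fact for granted.
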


\begin{proof}
The $\CC$-conjugate linear involution $*$ on columns of $S$ is given by the involution on elements of $S$,  defined as
$(S_{ij})^*=S_{ij^*}=\bar S_{ij}$ for all $i,j$.
Therefore, if $S_j$ denotes the $j$th column of a Fourier matrix $S$ then the involution on $S_j$ is given by $(S_j)^*=S_{j^*}=[S_{0j^*}, S_{1j^*}, \hdots, S_{(r-1),j^*}]^T$. Since $b_i=s_{0i}\tilde b_{i}$, the structure constants generated by the basis $\BB$ are given by $\lambda_{ijk}={N_{ijk}s_{0i}s_{0j}}{s^{-1}_{0k}}$, for all $i,j, k$.  As $S$ is a unitary matrix, therefore, $N_{ij0}= \sum_l{S_{li}S_{lj}\bar S_{l0}}{S^{-1}_{l0}}\neq 0 \iff j=i^*$ and $N_{ii^*0}=1>0$, for all $i,j$. Hence, $\lambda_{ij0}\neq 0 \iff j=i^*$  and $\lambda_{ii^*0}>0$, for all $i,j$.

Define a $\CC$-conjugate linear map $\delta:A\longrightarrow \CC$  as  $\delta(\sum_ia_i\tilde{b_i})=\sum_i\bar a_is_{0i}$. Thus $\delta(b_i)= \delta(s_{0i}\tilde b_{i}) = s^2_{0i}$, hence $\delta$ is positive valued. The map $\delta$ is an algebra homomorphism, to see:
$$\begin{array}{rcl}\delta(b_i b_j)&=&
s_{0i}s_{0j}\sum\limits^{}_{k}N_{ijk}\delta(\tilde{b_k})
\\&=&s_{0i}s_{0j}[\dfrac{s_{0i}s_{0j}}{d_0} \sum\limits^{r-1}_{k=0}\bar s_{0k}s_{0k}]+s_{0i}s_{0j}[\sum\limits^{r-1}_{l=1}\dfrac{s_{li}s_{lj}}{d_l}
\sum\limits^{r-1}_{k=0}\bar s_{lk}s_{0k}]
\\&=&s_{0i}s_{0j}[\dfrac{s_{0i}s_{0j}}{d_0} d_0]+s_{0i}s_{0j}[0]
\\&=&s^2_{0i}s^2_{0j}=\delta(b_i)\delta(b_j)\end{array}$$
In the third last equality we use the fact that $ \sum\limits^{n}_{k=1} s_{0k}\bar  s_{0k} =d_0$ and the rows of $s$-matrix are orthogonal. Since $b_{i^*}= \bar s_{0i}\tilde{b}_{i^*}=s_{0i}\tilde{b}_{i^*}$, $(b_ib_{i^*})_0=s_{0i}s_{0i}(\tilde b_i \tilde b_{i^*})_0= s^2_{0i}=\delta(b_i)$. Therefore, $\delta$ is the positive degree map and $\BB$ is the standard basis. The value of a basis element $b_j$ under an irreducible character (linear character) is given by $p_{ij}$, for all $i,j$.
\end{proof}

By Theorem \ref{C-Algebra}, every Fourier matrix gives rise to a $C$-algebra.
The algebra $A$ in the above theorem is denoted by $(A, \BB, \delta)$, and we say  \emph{$(A,\BB, \delta)$ is arising from a Fourier matrix $S$}. A $C$-algebra arising from a Fourier matrix $S$ of rank $r$ is a symmetric $C$-algebra if and only if  $S \in \RR^{r \times r}$.

\section{General results on $C$-algebras arising from Fourier matrices}

Let $s$ be an integral Fourier matrix.  Then $\sqrt{d_j}s_{ij}=\sqrt{d_i}s_{ji}$, for all $i,j$.
Therefore,  $d_0=d_i\delta(b_i)$ for all $i$, that is, the degrees and norms divide $d_0$. This can be generalized to $s$-matrices under a certain condition.
In the following proposition we prove that not only the list of the degrees of a $C$-algebra arising from a Fourier matrix matches with list of multiplicities but their indices also match. Also, we prove that a $C$-algebra arising from an integral Fourier matrix has perfect square integral degrees and rational structure constants.

\begin{prop}\label{SymmetrizingPropertyProp}
Let $(A,\BB, \delta)$ be a $C$-algebra arising from a Fourier matrix $S$. Let $m_j$ be the multiplicity of $A$ corresponding to the irreducible character $\chi_j$.
\begin{enumerate}
\item The degrees of $A$ exactly match with the multiplicities of $A$, that is, $m_j=\delta(b_j)$, for all $j$.
\item If degrees of $A$ are rational numbers then the degrees and norms are integers, and both the degrees and norms divide the order of $A$.
\item If the associated $s$-matrix has integral entries then  degrees of $A$ are perfect square integers, and $A$ is a rational $C$-algebra.
\item If $A$ is a rational $C$-algebra the degrees of $A$ are integers.
\item $A$ has unique degree if and only if $s$-matrix has unique norm.
\end{enumerate}
\end{prop}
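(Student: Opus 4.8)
The plan is to derive all five parts from one backbone identity, $d_j\,\delta(b_j)=d_0$, valid for every $j$. I would establish it first: since the rows of the unitary matrix $S$ have unit norm, $d_j=\sum_k|s_{jk}|^2=S_{j0}^{-2}\sum_k|S_{jk}|^2=S_{j0}^{-2}$, while Theorem~\ref{C-Algebra} gives $\delta(b_j)=s_{0j}^2$, and symmetry $S_{0j}=S_{j0}$ turns this into $\delta(b_j)=S_{j0}^2S_{00}^{-2}$; multiplying, $d_j\,\delta(b_j)=S_{00}^{-2}=d_0$. This is just the symmetrizing relation $\sqrt{d_j}\,s_{ij}=\sqrt{d_i}\,s_{ji}=S_{ij}S_{i0}^{-1}S_{j0}^{-1}$ specialized at $i=0$, so it holds for \emph{every} Fourier matrix, not only integral ones. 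Alongside it I would record $n=d_0=\sum_j\delta(b_j)$, and the fact that each degree $\delta(b_j)=p_{0j}$ and each norm $d_j=\sum_k s_{jk}s_{jk^*}$ is an algebraic integer, since all entries of the $P$- and $s$-matrices are.

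For part (i) I would invoke the first orthogonality relation for the irreducible characters of a commutative $C$-algebra of order $n$ with valencies $\delta(b_j)$, namely $\sum_j\delta(b_j)^{-1}|\chi_i(b_j)|^2=n/m_i$ (see \cite{Hig87}). Substituting $\chi_i(b_j)=p_{ij}=s_{ij}s_{0j}$ and $\delta(b_j)=s_{0j}^2$, the valency weights cancel the factor $s_{0j}^2$, leaving $n/m_i=\sum_j|s_{ij}|^2=d_i$, so $m_i=d_0/d_i$; the backbone identity then gives $m_i=\delta(b_i)$ index by index, which is exactly the asserted index-wise matching. Part (v) falls out of the same identity: $\delta(b_j)=d_0/d_j$ shows the degrees are constant precisely when the norms are, and since $\delta(b_0)=1$ forces the common degree to be $1$ and the common norm to be $d_0$, ``unique degree'' and ``unique norm'' are equivalent.

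The remaining parts are integrality bookkeeping on top of the backbone. For part (ii), rational degrees that are algebraic integers must lie in $\ZZ$; then $d_0=\sum_j\delta(b_j)\in\ZZ$, so $d_j=d_0/\delta(b_j)$ is rational, and being an algebraic integer it too lies in $\ZZ$; finally $d_0=d_j\,\delta(b_j)$ exhibits both $\delta(b_j)$ and $d_j$ as divisors of the order $d_0$. For part (iii), an integral $s$-matrix has $s_{0j}\in\ZZ$ positive, so $\delta(b_j)=s_{0j}^2$ is a perfect square, and since $N_{ijk}=\sum_l S_{li}S_{lj}\bar S_{lk}S_{l0}^{-1}\in\ZZ$ by axiom (v), the identity $\lambda_{ijk}=N_{ijk}\,s_{0i}s_{0j}s_{0k}^{-1}$ writes each structure constant as a ratio of integers, so $A$ is rational. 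For part (iv), rationality of $A$ gives $\delta(b_j)=\lambda_{jj^*0}\in\QQ$ via the standard-basis property of Theorem~\ref{C-Algebra}, and a rational algebraic integer is a rational integer.

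I expect the only genuinely nonelementary step to be part (i), since it is the one place that leaves the explicit matrix formulas and appeals to the representation theory of commutative $C$-algebras; the care there is to cite the orthogonality relation in the normalization that makes the valency weights cancel against $\delta(b_j)=s_{0j}^2$. Everything else reduces to the identity $d_j\,\delta(b_j)=d_0$ together with the standard fact that a rational algebraic integer is a rational integer.
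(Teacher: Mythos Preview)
Your proposal is correct and follows essentially the same route as the paper: both rest on the orthogonality relation $m_j=\delta(\BB^+)\big/\sum_i\lambda_{ii^*0}^{-1}|\chi_j(b_i)|^2$ (the paper cites \cite{BI} rather than \cite{Hig87}) to obtain $m_j=d_0/d_j$, and both use the symmetry of $S$ to get the key identity $d_j\,\delta(b_j)=d_0$, from which parts (ii)--(v) are the same algebraic-integer bookkeeping you describe. The only organizational difference is that you isolate and prove $d_j\,\delta(b_j)=d_0$ up front, whereas the paper derives it inside the proof of (i); this makes your write-up slightly cleaner but not substantively different.
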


\begin{proof}
$(i)$. The multiplicities, $m_j = \delta(\BB^+)/\sum\limits_{i=0}^{r-1}\frac{|\chi_j(b_{i^*})|^2}{\lambda_{ii^*0}}$, see \cite[Corollary 5.6]{BI}. Therefore,
$m_j=d_0/\sum\limits_{i=0}^{r-1}|s_{ji}|^2
=d_0/d_j$, for all $j$.
Since $S$ is a symmetric matrix, $d_i|s_{ji}|^2 = d_j|s_{ij}|^2$. Thus  $d_0/d_j = s^2_{0j}$, for all $j$. Hence $m_j=d_0/d_j=s^2_{0j}=\delta(b_j)$, for all $j$.

\smallskip

(ii). The fact that the $P$-matrix has algebraic entries implies that the degrees of the algebra  $A$ are integers, consequently $d_0$ is an integer. By the above part (i), $d_j=d_0\delta(b_j)^{-1}$, for all $j$. Therefore,  $d_j$ are rational numbers. The entries of $s$-matrix are algebraic integers, thus the norms $d_j$ are rational integers. Hence both the degrees and norms divide the order of $A$, because $d_0=d_j\delta(b_j)$, for all $j$.

\smallskip

(iii). The entries of $s$-matrix are integers and $\sqrt{\delta(b_j)}=s_{0j}$ imply $\sqrt{\delta(b_j)}\in \ZZ$, for all $j$.
By the proof of Theorem \ref{C-Algebra}, $\lambda_{ijk}= N_{ijk}
\sqrt{\delta(b_i)}\sqrt{\delta(b_j)}/\sqrt{\delta(b_k)}$, for all $i,j,k$. Hence $\lambda_{ijk} \in \QQ$, for all $i,j,k$.

\smallskip

(iv). Since $A$ is a rational algebra, $\lambda_{ijk}\in \QQ$. Therefore, $\delta(b_i)=\lambda_{ii^*0} \in \QQ$, because $\BB$ is a standard basis.  But the  entries of $P$-matrix are algebraic integers and $\delta(b_i)=p_{0i}$, for all $i$. Therefore,  $\delta(b_i)$ are integers. (Though we remark that rationality of all the structure constants is not required.)

\smallskip

(v). The result follows from the fact that  $d_0=d_i\delta(b_i)$, for all $i$.
\end{proof}

By Proposition \ref{SymmetrizingPropertyProp} (i), a $C$-algebra arising from a Fourier matrix $S$ is a self-dual $C$-algebra. But every self-dual $C$-algebra not necessarily  arise from a Fourier matrix, see Example \ref{AllenIntConditionNotSatisfiedEx}. Therefore, in general, the converse is not true. In the next theorem we prove that the converse is also true under a certain integrality condition.

\begin{thm}\label{IntegralCAlgebrasAndFourierMatrices}
Let $(A,\BB, \delta)$ be a $C$-algebra with standard basis $\BB=\{b_0,b_1, \hdots, b_{r-1}\}$. Let $\lambda_{ijk}$ be the structure constants generated by the basis $\BB$. Then $A$ is self-dual and ${\lambda_{ijk}\sqrt{\delta(b_k)}}/{\sqrt{\delta(b_i)\delta(b_j)}}\in \ZZ$ if and only if $A$ arises from a Fourier matrix $S$.
\end{thm}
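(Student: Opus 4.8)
The statement is a biconditional, so the plan is to treat the two implications separately; the forward implication is essentially bookkeeping from the earlier results, while the reverse implication carries the real content.

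For the forward direction, suppose $A$ arises from a Fourier matrix $S$. Self-duality is immediate from Proposition \ref{SymmetrizingPropertyProp}(i), which gives $m_j = \delta(b_j)$ for every $j$. For the integrality condition, I would recall from the proof of Theorem \ref{C-Algebra} the identity $\lambda_{ijk} = N_{ijk}\sqrt{\delta(b_i)\delta(b_j)}/\sqrt{\delta(b_k)}$, where the $N_{ijk}$ are the structure constants of the columns of the associated $s$-matrix. Solving for the quotient gives $\lambda_{ijk}\sqrt{\delta(b_k)}/\sqrt{\delta(b_i)\delta(b_j)} = N_{ijk}$, and $N_{ijk}\in\ZZ$ by axiom $(v)$ of Definition \ref{ModularDef}. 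So this direction needs no new computation.

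For the reverse direction I would invert the two-step rescaling of Section 2. Since $A$ is commutative and semisimple, it has a first eigenmatrix $P=[p_{ij}]$ with $p_{i0}=1$ and $p_{0j}=\delta(b_j)$. Define $s_{ij}=p_{ij}/\sqrt{\delta(b_j)}$ and then $S_{ij}=s_{ij}/\sqrt{d_i}$, where $d_i=\sum_j|s_{ij}|^2$; the aim is to show $S$ is a Fourier matrix whose attached $C$-algebra (via Theorem \ref{C-Algebra}) is $A$ itself. Three of the required properties fall out directly. Axiom $(iii)$ is easy, since $s_{i0}=p_{i0}/\sqrt{\delta(b_0)}=1$ forces $S_{i0}=1/\sqrt{d_i}>0$. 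The unitarity half of axiom $(i)$ follows by running the multiplicity formula of \cite{BI} exactly as in Proposition \ref{SymmetrizingPropertyProp}(i): this yields $m_i=d_0/d_i$, and the column orthogonality of $P$ then rescales to $s\bar s^T=\mathrm{diag}(d_0,\dots,d_{r-1})$, so dividing the $i$th row by $\sqrt{d_i}$ gives $S\bar S^T=I$. Axiom $(v)$ holds because the column structure constants of the $s$-matrix are $N_{ijk}=\lambda_{ijk}\sqrt{\delta(b_k)}/\sqrt{\delta(b_i)\delta(b_j)}$, which are integers by the hypothesis. I note that none of these three uses self-duality, which is consistent with self-duality being genuinely necessary (by the forward direction) and therefore entering only through the remaining axiom.

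The crux is the symmetry $S=S^T$, and this is where self-duality must be used. I would first reduce it: $S_{ij}=S_{ji}$ is equivalent to the symmetrizing property $\sqrt{d_j}\,s_{ij}=\sqrt{d_i}\,s_{ji}$, and substituting $s_{ij}=p_{ij}/\sqrt{\delta(b_j)}$ together with $d_i=d_0/m_i$ and the self-dual identity $m_i=\delta(b_i)$ collapses this to the clean relation $\delta(b_i)\,p_{ij}=\delta(b_j)\,p_{ji}$ for all $i,j$. This is precisely the symmetrizing relation between the first and second eigenmatrices of $A$ that self-duality provides. The main obstacle I anticipate is making this last step airtight: I must pin down the exact form in which self-duality delivers $\delta(b_i)p_{ij}=\delta(b_j)p_{ji}$ — the unconjugated version, not a conjugated variant coming from the involution $*$ — and verify that the square roots $\sqrt{\delta(b_j)}$ and $\sqrt{d_i}$ are chosen consistently so that the rescaling genuinely inverts the construction of Theorem \ref{C-Algebra}. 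Once symmetry is in hand, the proof closes quickly: by construction $s_{ij}\sqrt{\delta(b_j)}=p_{ij}$, so the $P$-matrix of the $C$-algebra attached to $S$ equals that of $A$; hence the two algebras share all structure constants $\lambda_{ijk}$, and $A$ arises from $S$.
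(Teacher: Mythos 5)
Your proposal is correct and follows essentially the same route as the paper's proof: the forward direction via Proposition \ref{SymmetrizingPropertyProp}(i) together with $\lambda_{ijk}=N_{ijk}\sqrt{\delta(b_i)\delta(b_j)}/\sqrt{\delta(b_k)}$, and the reverse direction by setting $s=PL$ with $L=\mathrm{diag}(1/\sqrt{\delta(b_j)})$, getting $s\bar s^T=\mathrm{diag}(d_0,\dots,d_{r-1})$ from the orthogonality/multiplicity relations, and using self-duality only for the symmetry of $S$. The one obstacle you flag resolves exactly as in the paper: normalizing $P\bar P=d_0I$ (without loss of generality, by self-duality) forces the second eigenmatrix to be $\bar P$, so \cite[Theorem 5.5 (i)]{BI} gives $p_{ji}/\delta(b_i)=p_{ij}/m_j$ with the conjugations cancelling on both sides, and $m_j=\delta(b_j)$ then yields the unconjugated relation $\delta(b_i)p_{ij}=\delta(b_j)p_{ji}$ you need.
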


\begin{proof}
Suppose
$(A,\BB, \delta)$ is a self-dual $C$-algebra and ${\lambda_{ijk}\sqrt{\delta(b_k)}}/{\sqrt{\delta(b_i)\delta(b_j)}}\in \ZZ$, for all $i,j,k$.
Let $P$ be the first eigenmatrix of $A$ and $I$ an identity matrix.
Therefore, without loss of generality, let $P\bar P=d_0I$. Thus by \cite[Theorem 5.5 (i)]{BI}, $p_{ji}/\delta(b_i)=p_{ij}/m_j$, for all $i,j$, where $m_j$ is the multiplicity of an irreducible character $\chi_j$. By Proposition \ref{SymmetrizingPropertyProp}, $m_j=\delta(b_j)$ for all $j$, therefore,  $p_{ji}/\delta(b_i)=p_{ij}/\delta(b_j)$, for all $i,j$.
Let $L$=diag$(1/\sqrt{\delta(b_0)},1/\sqrt{\delta(b_1)}, $ $\hdots, 1/\sqrt{\delta(b_{r-1}})$, and $s=PL$. Therefore, $s\bar s^T$=diag$(d_0/\delta(b_0),d_0/\delta(b_1), \hdots, d_0/\delta(b_{r-1}))$= diag$(d_0,d_1, \hdots, d_{r-1})$, where $d_0=d_i\delta(b_i)$, $d_i=\sum_j|s_{ij}|^2$ and $s_{ij}=p_{ij}/\sqrt{\delta(b_j)}$, for all $i, j$.
Also, $s_{ij}\sqrt{d_j}
=s_{ij}\sqrt{d_i}$, because $p_{ji}/\delta(b_i)=p_{ij}/\delta(b_j)$, for all $i,j$.. Hence $s$ is an $s$-matrix associated to a Fourier matrix $S=[S_{ij}]$, where $S_{ij}=s_{ij}/\sqrt{d_i}$, for all $i,j$. Since $N_{ijk} \in \ZZ$, the other direction follows from the Proposition \ref{SymmetrizingPropertyProp}(i).
\end{proof}

\begin{remark}\label{GroupAlgIsArsingAllenMatrix}\rm
By Pontryagin duality,  every group algebra of a finite abelian group is self-dual. A group algebra of a finite abelian group also satisfies the integrality condition of the above theorem, because it has all degrees (augmentations) equal to $1$.
Therefore, a group algebra of a finite abelian group is arising from a Fourier matrix.
On the other hand, a self-dual $C$-algebra with a unique degree and nonnegative structure constants is a group algebra with basis a finite abelian group, see Theorem  \ref{ClassificationOfAllenMatricesThm}. Hence a self-dual $C$-algebra with nonnegative structure constants has a unique degree if and only if it is a group algebra of a finite abelian group.
\end{remark}

In the next example, we show that the condition ${\lambda_{ijk}\sqrt{\delta(b_k)}}/{\sqrt{\delta(b_i)\delta(b_j)}}\in \ZZ$ cannot be removed.

\begin{example}\label{AllenIntConditionNotSatisfiedEx}\rm
Since the row sum of a character table is zero, the character table of a $C$-algebra of rank $2$ with basis $\BB=\{b_0, b_i\}$ is given by $P=\left[
                                         \begin{array}{cc}
                                           1 & n \\
                                           1 & -1 \\
                                         \end{array}
                                       \right]$, and the structure constants are given by $b^2_1=nb_0+(n-1)b_1$. (Note that, for $n\in \ZZ^+$, $P$
is the first eigenmatrix of an association scheme of order $n+1$ and rank $2$.) Apply the two-step scaling on the matrix $P$, we obtain $S=\dfrac{1}{\sqrt{n+1}}\left[
                                         \begin{array}{cc}
                                           1 & \sqrt{n} \\
                                           \sqrt{n} & -1 \\
                                         \end{array}
                                       \right]$.
But for $1\neq n\in \RR^+$, the condition                                     ${\lambda_{ijk}\sqrt{\delta(b_k)}}/{\sqrt{\delta(b_i)\delta(b_j)}}\in \ZZ$ is not satisfied, because  $\lambda_{111}/\sqrt{n} \not\in \ZZ$,  and the matrix $S$ is not a unitary matrix.
\end{example}

Cuntz proved that the size of every integral Fourier matrix with odd rank is a square integer, \cite[Lemma 3.7]{MC1}. In the following lemma we generalize the Cuntz's result.

\begin{lemma}\label{RealMatrixSymmetricLemma}
Let $(A,\BB, \delta)$ be a $C$-algebra arising from a Fourier matrix $S$ with rank $r$. Let $r$ be an odd integer. If determinant of the $P$-matrix is an integer then the order of $A$ is a square integer.
\end{lemma}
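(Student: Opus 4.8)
The plan is to express $\det P$ in closed form in terms of the order $d_0$, and then to extract both rationality and squareness of $d_0$ from the hypothesis $\det P\in\ZZ$. Writing the two-step rescaling in matrix form gives $P = D_1^{-1}SD_2$, where $D_1=\mathrm{diag}(S_{00},\dots,S_{r-1,0})$ produces the $s$-matrix from $S$ and $D_2=\mathrm{diag}(s_{00},\dots,s_{0,r-1})$ rescales its columns. Thus $\det P = \det(D_1)^{-1}\det S\,\det(D_2)$. Because $S$ is symmetric we have $S_{i0}=S_{0i}$, so the products $\prod_i S_{i0}$ and $\prod_j S_{0j}$ coming from $D_1$ and $D_2$ cancel, leaving $\det P = S_{00}^{-r}\det S = d_0^{r/2}\det S$, using $S_{00}=d_0^{-1/2}$. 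Since $S$ is unitary, $|\det S|=1$; and since $\det P\in\ZZ\subset\RR$ while $d_0^{r/2}>0$, the factor $\det S=\det P/d_0^{r/2}$ is real of modulus one, hence $\det S=\pm1$. Therefore $d_0^{r/2}=|\det P|\in\ZZ$, and in particular $d_0^{r}=(\det P)^2\in\ZZ$.

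The heart of the argument, and the step I expect to be the main obstacle, is to upgrade $d_0^r\in\ZZ$ to $d_0\in\ZZ$: the condition $d_0^r\in\ZZ$ alone does not suffice, since a positive real algebraic integer with integral $r$-th power need not be rational. To rule this out I would use the fusion ring $R$ spanned over $\ZZ$ by the columns $\tilde b_0,\dots,\tilde b_{r-1}$ of the $s$-matrix, whose structure constants $N_{ijk}$ are integers and whose characters are the rows, $\chi_l(\tilde b_j)=s_{lj}$. Consider $C=\sum_j \tilde b_j\tilde b_{j^*}\in R$; since $\bar s_{lj}=s_{lj^*}$, its character values are $\chi_l(C)=\sum_j s_{lj}\bar s_{lj}=d_l$, so in particular $d_0=\chi_0(C)$. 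As $C$ has integer coordinates and the $N_{ijk}$ are integers, multiplication by $C$ on $R$ is given by an integer matrix, and because the $s$-matrix is invertible the characters diagonalize the regular representation, so its characteristic polynomial is $\prod_l(x-d_l)\in\ZZ[x]$. Hence every Galois conjugate of $d_0$ is one of the norms $d_l=\sum_j|s_{lj}|^2$, each of which is a positive real. Since $d_0^r\in\ZZ$ is Galois-fixed, every conjugate $\alpha$ of $d_0$ satisfies $\alpha^r=d_0^r$ with $\alpha>0$; as $t\mapsto t^r$ is injective on $\RR_{>0}$, this forces $\alpha=d_0$. Thus $d_0$ has a single conjugate, so $d_0\in\QQ$, and being an algebraic integer it lies in $\ZZ$.

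Finally I would conclude using the oddness of $r$. Writing $r=2m+1$, the relation $d_0^{r/2}=|\det P|\in\ZZ$ reads $d_0^{m}\sqrt{d_0}\in\ZZ$, so $\sqrt{d_0}=|\det P|/d_0^{m}\in\QQ$; as $d_0\in\ZZ$, a rational square root of $d_0$ is necessarily an integer, whence $d_0$ is a perfect square. The only genuinely delicate point is the rationality of $d_0$ established in the previous paragraph; the determinant computation and the concluding descent by the oddness of $r$ are routine, the latter mirroring Cuntz's argument in the integral case, where $d_0\in\ZZ$ holds automatically.
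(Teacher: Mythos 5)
Your argument is correct, and it differs from the paper's in an instructive way. The paper gets the key identity in one line: self-duality gives $P\bar P = nI$, so $\det P\,\overline{\det P} = n^r$, and since $\det P\in\ZZ$ forces $\overline{\det P}=\det P$, this is $(\det P)^2=n^r$; it then immediately concludes ``since $r$ is odd, $\sqrt n$ is an integer.'' You reach the same identity $(\det P)^2=d_0^r$ by the longer route of factoring the two-step rescaling as $P=D_1^{-1}SD_2$ and invoking symmetry and unitarity of $S$ to get $\det P = d_0^{r/2}\det S$ with $\det S=\pm1$ --- equivalent, but the paper's $P\bar P=nI$ computation is quicker and sidesteps the $\det S=\pm1$ step. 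The substantive difference is your middle paragraph: the paper's final sentence tacitly treats $n$ as a (rational) integer, but the lemma's hypotheses do not grant this --- $n=d_0$ is a priori only a positive real algebraic integer, and $d_0^r=(\det P)^2\in\ZZ$ alone does not force $d_0\in\QQ$ (e.g.\ $d_0^3=4$ is consistent with that equation in the abstract). Your Galois argument closes this gap cleanly: multiplication by $C=\sum_j \tilde b_j\tilde b_{j^*}$ on the $\ZZ$-span of $\tilde\BB$ is an integer matrix with characteristic polynomial $\prod_l(x-d_l)\in\ZZ[x]$ (using $s_{lj^*}=\bar s_{lj}$ and invertibility of the $s$-matrix), so every conjugate of $d_0$ is some positive real $d_l$; since $d_0^r$ is Galois-fixed and $t\mapsto t^r$ is injective on $\RR_{>0}$, all conjugates equal $d_0$, giving $d_0\in\ZZ$, after which your descent $\sqrt{d_0}=|\det P|/d_0^{m}\in\QQ$ for $r=2m+1$ finishes exactly as in the paper. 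So your proposal is not merely a correct alternative; it supplies a justification the paper's proof omits (one could instead import rationality of the degrees via Proposition \ref{SymmetrizingPropertyProp}(ii), but that hypothesis is not assumed in this lemma), at the cost of a more laborious determinant computation.
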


\begin{proof}
Let $\mbox{det}(P)$ denote the determinant of $P$-matrix. $A$ is a self dual $C$-algebra, thus $P\bar P=nI$ implies $(\mbox{det}(P))^2=n^r$, where $n$ is the order of $A$.  Since $r$ is an odd integer, $\sqrt{n}$ is an integer.
\end{proof}

\section{Homogeneous $C$-algebras arising from $s$-matrices}

In this section we show that every homogenous $C$-algebra arising from a Fourier matrix has unique degree, and under certain condition 
it is a group algebra.
In the following lemma we prove that if a degree of a $C$-algebra arising from a Fourier matrix divides the all other nontrivial degrees then that degree might be equal to $1$.

\begin{lemma}\label{DividingAllenCaseThm}Let $(A,\BB, \delta)$ be a $C$-algebra  arising from a Fourier matrix $S$  such that the degrees $\delta(b_i)\in \ZZ$, for all $i$.  If for a given $j$, $\delta(b_j)$ divides $\delta(b_i)$ for all $i\geq 1$, then  $\delta(b_j)=1$, for all $j$.
\end{lemma}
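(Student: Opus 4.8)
The plan is to reach $\delta(b_j)=1$ by a short divisibility argument, exploiting two facts about the order $d_0$ that follow from the earlier results: on one hand $\delta(b_j)$ divides $d_0$, and on the other hand $d_0$ leaves remainder $1$ when divided by $\delta(b_j)$. Fix the index $j$ supplied by the hypothesis and write $m:=\delta(b_j)$; the goal is to show $m=1$, and since $j$ is an arbitrary index satisfying the dividing condition, this yields the stated conclusion for every such $j$.

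First I would secure integrality of the norms. Since all degrees are assumed to lie in $\ZZ$, they are in particular rational, so Proposition \ref{SymmetrizingPropertyProp}(ii) applies: the norms $d_i$ are positive integers and $d_0=d_i\,\delta(b_i)$ for every $i$. Taking $i=j$ gives $d_0=m\,d_j$ with $d_j\in\ZZ$, whence $m\mid d_0$.

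Next I would invoke the order formula. By Theorem \ref{C-Algebra} the order of $A$ equals $d_0$, and by definition $d_0=\sum_{i=0}^{r-1}\delta(b_i)$. Because $b_0=1_A$ and $\delta$ is an algebra homomorphism, $\delta(b_0)=1$, so $d_0=1+\sum_{i=1}^{r-1}\delta(b_i)$. The hypothesis that $m=\delta(b_j)$ divides $\delta(b_i)$ for all $i\geq 1$ makes every term of the remaining sum a multiple of $m$, so $d_0\equiv 1\pmod{m}$. Combining this with $m\mid d_0$ forces $m\mid 1$, i.e.\ $\delta(b_j)=m=1$.

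There is essentially no computational obstacle here; the entire content lies in using two different descriptions of the single quantity $d_0$ simultaneously. The one point demanding care is that the hypothesis $\delta(b_i)\in\ZZ$ must genuinely be upgraded to integrality of the norms $d_i$, so that $d_0=m\,d_j$ is a divisibility statement in $\ZZ$ rather than merely in $\RR$; this is exactly the role of Proposition \ref{SymmetrizingPropertyProp}(ii). I would also be careful to flag that the remainder $1$ is not incidental but is produced precisely by the trivial degree $\delta(b_0)=1$, which is what prevents $m$ from dividing the full sum and thereby breaks the potential contradiction in favor of $m=1$.
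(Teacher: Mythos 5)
Your proof is correct and follows essentially the same route as the paper's: invoke Proposition \ref{SymmetrizingPropertyProp}(ii) to get $d_0=d_j\,\delta(b_j)$ with integer norms, then divide the order formula $d_0=1+\sum_{i=1}^{r-1}\delta(b_i)$ by $\delta(b_j)$ to force $\delta(b_j)\mid 1$. Your congruence phrasing ($d_0\equiv 1\pmod{\delta(b_j)}$) is merely a cleaner restatement of the paper's computation $d_0\,\delta(b_j)^{-1}=\delta(b_j)^{-1}+\alpha\in\ZZ$.
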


\begin{proof} The degrees $\delta(b_i)$ are integers, therefore, by Proposition \ref{SymmetrizingPropertyProp}(ii), $d_j$ are integers.  Since $d_0=1+\sum\limits^{r-1}_{i=1}\delta(b_i)$, $ ({1+\sum\limits^{r-1}_{i=1}\delta(b_i)}){\delta(b_j)^{-1}} =\delta(b_j)^{-1}+\alpha\in \ZZ$, where $\alpha\in \ZZ$. Hence $\delta(b_j)=1$, for all $j$.
\end{proof}

Note that, the above lemma is true for any self-dual $C$-algebra with integral norms and degrees.
We can apply this lemma to recognize some $C$-algebras not arising from $s$-matrices just by looking at the degree pattern (or the first row of the  character table) of a $C$-algebra. For example, the character tables of the association schemes {\tt as5(2),  as9(2), as9(3), as9(8)} and {\tt as9(9)} violate the above result so they are not the character tables of the adjacency algebras arising from Fourier matrices. For the character tables of the association schemes see \cite{HM}.

\begin{definition}
Let $(A,\BB, \delta)$ be a $C$-algebra arising from a Fourier matrix $S$. 
Let $t\in \RR^+$ and $\delta(b_i)=t$,  for all $1\leq i\leq r-1$. Then $A$ is called a \emph{homogenous $C$-algebra} with \emph{homogeneity degree} $t$, and the associated Fourier matrix $S$ is called a \emph{homogeneous Fourier matrix}.
\end{definition}

In the next proposition, we prove that if a $C$-algebra arising from a Fourier matrix $S$ is either homogeneous,  or of prime order with integral degrees  then each degree of the algebra is equal to $1$.

\begin{prop}\label{HomogeneousAllenCaseAndPrimeOrderThm}Let $(A,\BB, \delta)$ be a $C$-algebra  arising from a Fourier matrix $S$.
\begin{enumerate}
  \item  If $A$ is a homogenous $C$-algebra then  $A$ has unique degree, that is, $\delta(b_i)=1$, for all $i$.
  \item  If the order of $A$ is a prime number and $\delta(b_i) \in \ZZ^+$,  for all $i$, then $\delta(b_i)=1$, for all $i$.
\end{enumerate}
\end{prop}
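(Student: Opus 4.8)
The plan is to treat the two parts separately, disposing of the prime-order case (ii) by a short arithmetic argument and then reducing the homogeneous case (i) to an integrality statement to which Lemma \ref{DividingAllenCaseThm} applies.

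For part (ii), I would begin from $\delta(b_0)=1$ and the order identity $d_0=\delta(\BB^+)=\sum_{i=0}^{r-1}\delta(b_i)=p$. Since the degrees are positive integers by hypothesis, Proposition \ref{SymmetrizingPropertyProp}(ii) shows the norms $d_i$ are integers as well, and the relation $d_0=d_i\,\delta(b_i)$ then forces $\delta(b_i)\mid p$ for every $i$. As $p$ is prime, each $\delta(b_i)\in\{1,p\}$. If some nontrivial $\delta(b_i)$ were equal to $p$, then $d_0=\sum_k\delta(b_k)\ge \delta(b_0)+\delta(b_i)=1+p>p=d_0$, a contradiction; hence $\delta(b_i)=1$ for all $i$. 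This part is entirely routine and I expect no obstacle, because integrality of the degrees is assumed outright.

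For part (i), write $t=\delta(b_i)$ for $1\le i\le r-1$, so that $d_0=1+(r-1)t$, and by Proposition \ref{SymmetrizingPropertyProp}(i) together with $d_0=d_i\delta(b_i)$ the nontrivial norms are all equal, $d_i=d_0/t=1/t+(r-1)$. The strategy is to prove that $t$ is a rational integer: once this is known, $t=\delta(b_j)$ divides every nontrivial degree $\delta(b_i)=t$, so Lemma \ref{DividingAllenCaseThm} forces $t=1$, which is exactly the assertion. Reducing to integrality is natural, since $t=s_{0i}^2$ is already an algebraic integer (the entries of the $s$-matrix are algebraic integers), while $1/t=d_i-(r-1)$ is a difference of algebraic integers; thus $t$ is an algebraic \emph{unit}, and a positive rational algebraic unit can only be $1$. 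So the whole matter comes down to showing $t\in\QQ$.

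The hard part is therefore the rationality of $t$, and here I would exploit the integrality of the fusion coefficients $N_{ijk}\in\ZZ$ generated by the $s$-matrix. The fusion matrix $N_i$ of $\tilde b_i$ (with $(N_i)_{jk}=N_{ijk}$) has integer entries and eigenvalues $\{s_{li}\}_l$, so every trace $\mathrm{tr}(N_iN_{i^*})=\sum_l |s_{li}|^2$ lies in $\ZZ$. Combining this with the column-orthogonality of the unitary matrix $S$, which gives $\sum_{l\ge 1}|s_{li}|^2=d_0/t-1$, and with $|s_{0i}|^2=t$, one obtains $t+1/t\in\ZZ$; equivalently $t$ satisfies an integral quadratic $t^2-Nt+1=0$ with $N=t+1/t\ge 2$. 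The remaining and most delicate step is to rule out the genuinely irrational roots, i.e.\ to force $N=2$ and hence $t=1$: this is precisely where symmetric trace identities no longer suffice, since a priori a real quadratic unit (of golden-ratio type, with $N\ge3$) also meets all the constraints assembled so far. I expect this to be the main obstacle, and closing it should require invoking the positivity and symmetry of $S$ itself rather than the integrality of the $N_{ijk}$ alone.
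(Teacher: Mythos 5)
Your part (ii) is correct and essentially the paper's argument (degrees divide the prime order $d_0$ by Proposition \ref{SymmetrizingPropertyProp}(ii)); your extra step ruling out $\delta(b_i)=p$ via $d_0=\sum_k\delta(b_k)$ is in fact slightly more careful than the paper, which passes directly from divisibility to $\delta(b_i)=1$. In part (i) your reductions are all sound: $\mathrm{tr}(N_iN_{i^*})=\sum_l|s_{li}|^2\in\ZZ$ is valid, the bookkeeping with column-orthogonality of $S$ and $d_l=d_0/t$ does yield $t+1/t\in\ZZ$, and the observation that $t$ is an algebraic unit correctly reduces everything to showing $t\in\QQ$. But the proposal stops exactly where the theorem lives, and the gap you flag is genuine, not presentational: symmetric trace-type constraints are invariant under the Galois substitution $t\mapsto 1/t$ and therefore can never separate a quadratic unit from $1$. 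Indeed the obstruction you name is realized: at rank $2$ the Yang--Lee/Fibonacci matrix $S=\frac{1}{\sqrt{2+\phi}}\left[\begin{smallmatrix}1&\phi\\ \phi&-1\end{smallmatrix}\right]$ with $\phi^2=\phi+1$ is unitary, symmetric, has positive first column and integer structure constants ($\tilde b_1^2=\tilde b_0+\tilde b_1$), and has $t=\phi^2$, $t+1/t=3$. So no further trace computation can close your argument; it also shows the rank-$2$ case is delicate (the paper's dismissal of rank $2$ by appeal to Example \ref{AllenIntConditionNotSatisfiedEx} is itself suspect there, since $\lambda_{111}/\sqrt{n}=(n-1)/\sqrt{n}=1\in\ZZ$ when $n=\phi^2$).

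The idea you are missing is the paper's asymmetric use of integrality, applied to a single well-chosen row rather than to traces. For rank $r\geq 3$ one can pick nonidentity $b_i,b_j$ with $b_j\neq b_{i^*}$ (if some nonidentity element is nonreal take $j=i$; if all are real take $i\neq j$); then $\lambda_{ij0}=0$ forces $N_{ij0}=0$, so the identity column is absent from $\tilde b_i\tilde b_j=\sum_{k\geq 1}N_{ijk}\tilde b_k$. Reading off only the top row, where $s_{0i}=s_{0j}=s_{0k}=\sqrt{t}$ for every nontrivial index, gives $t=\bigl(\sum_{k\geq 1}N_{ijk}\bigr)\sqrt{t}$, hence $\sqrt{t}\in\ZZ$. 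This kills irrationality at a stroke, and then Lemma \ref{DividingAllenCaseThm} (or your own unit argument, since now $1/t=d_i-(r-1)$ is a rational integer) forces $t=1$. Note that this single-row identity is exactly not Galois-symmetric in $t\mapsto 1/t$, which is why it succeeds where your trace identities provably cannot; and the requirement $r\geq 3$ for the pair $(b_i,b_j)$ to exist is precisely why rank $2$ is the exceptional case above. Your closing guess that one would need the positivity and symmetry of $S$ beyond integrality of the $N_{ijk}$ turns out to be unnecessary for $r\geq 3$: integrality of one structure constant sum, exploited at the distinguished row, suffices.
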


\begin{proof}(i). Let $t$ be the homogeneity degree of $A$. If the rank of $A$ is $2$ then the result holds trivially, see Example \ref{AllenIntConditionNotSatisfiedEx}.
Suppose $t\in \RR^+\backslash \QQ^+$.  Let $b_i, b_j$ be two nonidentity elements of $\BB$ such that $b_j\neq b_{i^*}$. Therefore, the support of $\tilde b_i\tilde b_j$ does not contain the identity element, because $\lambda_{ij0}=0$ implies $N_{ij0}=0$. Note that the first entry of the column vector  $\tilde b_i\tilde b_j$ is $t$. To obtain $t$ from the linear combinations of the nonidentity elements of $\tilde \BB$ the structure constants $N_{ijk}$ must involve $\sqrt{t}$, because the first entry of each column of $s$-matrix is $\sqrt{t}$ except the first column.
Therefore, the structure constants $N_{ijk}$ cannot be integers, a contradiction. Thus $t\in \QQ^+$. The entries of the $P$-matrix are algebraic integers, thus $t$ must be an integer. Therefore, $t$ divides $\delta(b_i)$ for all $i>0$. Hence by Lemma \ref{DividingAllenCaseThm}, $t=1$, that is, $\delta(b_i)=1$, for all $i$.

\smallskip

(ii). By Proposition \ref{SymmetrizingPropertyProp} (ii), $\delta(b_i)$ divides $d_0$,  for all $i$. 
But $d_0$ is a prime number, therefore, $\delta(b_i)=1$, for all $i$.
\end{proof}

Michael Cuntz made a conjecture that is a generalization to  his result, see  \cite[Lemma 3.11]{MC1}. The conjecture states that  if $s$ is  an integral Fourier matrix with unique norm, then $s \in \{\pm1\}^{r \times r}$, see \cite[Conjecture 3.10]{MC1}.
The conjecture cannot be extended to non-real $s$-matrices, the trivial contradiction is the character tables of cyclic groups of prime order.
Note that if all the structure constants are nonnegative then $|p_{ij}|\leq p_{0j}$ (or equivalently, $|s_{ij}|\leq s_{0j}$) for all $j$, see \cite[Proposition 4.1]{Xu1}. The next theorem completely classify the $s$-matrices under the conditions of Proposition \ref{HomogeneousAllenCaseAndPrimeOrderThm} and $|p_{ij}|\leq p_{0j}$, for all $j$.
It also shows that even for real $s$-matrices it is not necessary to pre-assume the uniqueness of the norm to get all entries of $s$-matrices equal to $\pm 1$.

\begin{thm}\label{ClassificationOfAllenMatricesThm} Let $(A,\BB, \delta)$ be a $C$-algebra arising from a Fourier matrix $S$. Suppose $A$ is either a homogeneous $C$-algebra, or  $A$ has prime order and integral degrees. Let $|s_{ij}|\leq s_{0j}$, for all $i,j$.
\begin{enumerate}
 \item The modulus of each entry of the $s$-matrix is $1$, that is,  $|s_{ij}|=1$, for all $i,j$.
 \item The columns of the $s$-matrix form an abelian group under entrywise multiplication.
\item If $s$-matrix is a real matrix then the columns of $s$-matrix form an elementary abelian group.
\end{enumerate}
\end{thm}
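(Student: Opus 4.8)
The plan is to reduce everything to the case of all degrees equal to $1$, which under either hypothesis is exactly the content of Proposition \ref{HomogeneousAllenCaseAndPrimeOrderThm}: it gives $\delta(b_i)=1$ for all $i$. Since $\delta(b_j)=s_{0j}^2$ (by the proof of Theorem \ref{C-Algebra}) and $s_{0j}=S_{0j}/S_{00}>0$ by the positivity axiom of a Fourier matrix, this forces $s_{0j}=1$ for every $j$; that is, the first row of the $s$-matrix is identically $1$, and the hypothesis $|s_{ij}|\le s_{0j}$ becomes simply $|s_{ij}|\le 1$. Moreover, the order is $d_0=\sum_j\delta(b_j)=r$, and by Proposition \ref{SymmetrizingPropertyProp} each norm satisfies $d_0=d_i\delta(b_i)$, whence $d_i=r$ for all $i$. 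Part (i) then follows immediately: combining $d_i=\sum_j|s_{ij}|^2=r$ with $|s_{ij}|\le 1$, a sum of $r$ terms each at most $1$ equals $r$ only if every term attains its maximum, so $|s_{ij}|=1$ for all $i,j$.

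For part (ii) the crux is closure of the columns under the entrywise product $\odot$. Writing $\tilde b_i\odot\tilde b_j=\sum_k N_{ijk}\tilde b_k$, the relation $s\bar s^T=rI$ recovers the structure constants as $N_{ijk}=\tfrac{1}{r}\sum_l s_{li}s_{lj}\overline{s_{lk}}$, and these are integers because $S$ is a Fourier matrix. The same relation makes the columns pairwise orthogonal of squared length $r$, so computing $\|\tilde b_i\odot\tilde b_j\|^2$ in two ways yields $r=r\sum_k N_{ijk}^2$, i.e. $\sum_k N_{ijk}^2=1$; reading off the first coordinate (where every $s$-entry equals $1$) gives $\sum_k N_{ijk}=1$. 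Over $\ZZ$ these two identities force a unique index $m$ with $N_{ijm}=1$ and all other $N_{ijk}=0$, so $\tilde b_i\odot\tilde b_j=\tilde b_m$ is again a column.

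With closure established, the group axioms are routine: the all-ones column $\tilde b_0$ is the identity, the conjugate column $\overline{\tilde b_i}=\tilde b_{i^*}$ is the entrywise inverse of $\tilde b_i$ since $s_{li}\overline{s_{li}}=|s_{li}|^2=1$, and commutativity of $\odot$ is automatic; hence the columns form an abelian group, proving (ii). For part (iii), if the $s$-matrix is real then part (i) forces every entry to be $\pm 1$, so $\tilde b_i\odot\tilde b_i=\tilde b_0$ and every element is an involution; an abelian group of exponent $2$ is elementary abelian.

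I expect the main obstacle to be the closure step in part (ii)—the passage from ``the $N_{ijk}$ are integers with $\sum_k N_{ijk}^2=1$ and $\sum_k N_{ijk}=1$'' to ``exactly one $N_{ijk}$ equals $1$.'' This is precisely where the integrality built into the Fourier-matrix axiom is indispensable: without it, the entrywise product of two columns could be a genuine linear combination of several columns rather than a single column, and the group structure would collapse.
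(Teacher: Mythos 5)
Your proof is correct, and its skeleton matches the paper's: both invoke Proposition \ref{HomogeneousAllenCaseAndPrimeOrderThm} to get $\delta(b_i)=1$ for all $i$, hence $s_{0j}=1$, $d_0=r$ and $d_i=r$ for all $i$, and then deduce part (i) from $\sum_j|s_{ij}|^2=r$ with $|s_{ij}|\le 1$; part (iii) is also essentially identical (entries $\pm1$ plus exponent $2$ forces an elementary abelian group). Where you genuinely diverge is the closure step in (ii). The paper bounds each structure constant individually: since $N_{ijk}=\frac{1}{r}\sum_l s_{li}s_{lj}\bar s_{lk}$ is an integer and each summand is unimodular, $N_{ijk}\in\{-1,0,1\}$, with $-1$ excluded because the $l=0$ summand equals $1$ (the sum cannot be $-r$); it then argues that $s_is_j$, being nonzero, cannot be orthogonal to all columns of the invertible matrix $s$, so some $N_{ijk}=1$, the equality case of the triangle inequality giving $s_is_j=s_k$. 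You instead extract two global identities --- $\sum_k N_{ijk}^2=1$ from a Parseval count (the columns are orthogonal of squared norm $r$ because $s\bar s^T=rI$ for a \emph{square} matrix forces $\bar s^Ts=rI$, a point you correctly rely on, and $\|\tilde b_i\odot\tilde b_j\|^2=r$ by part (i)) and $\sum_k N_{ijk}=1$ from the first coordinate --- which over $\ZZ$ immediately pin down a unique $m$ with $N_{ijm}=1$. Your route buys a little rigor: uniqueness of the column $s_k$ is explicit, whereas the paper leaves it implicit (its statement ``$N_{ijk}$ are either $0$ or $r$'' is also written loosely, and it omits the conjugate in $N_{ijk}$ that your formula correctly includes), and you verify inverses via $\overline{\tilde b_i}=\tilde b_{i^*}$ with $s_{li}\bar s_{li}=1$, a group axiom the paper passes over after identifying $\tilde b_0$ as the identity. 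Both arguments lean on integrality of the $N_{ijk}$ at exactly the same point, so this is a cleaner variant of the same proof rather than a new one.
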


\begin{proof}(i). By Theorem \ref{HomogeneousAllenCaseAndPrimeOrderThm}, $\delta(b_j) =1$, for all $j$. Thus $d_j=d_0$, for all $j$. Therefore, $|s_{ij}|\leq 1$, hence $|s_{ij}|=1$, for all $i,j$.

\smallskip

(ii). Let $s_i$ be the $i$-th column of $s$-matrix and let $s_is_j$ be the entrywise multiplication of $s_i$ and $s_j$.
For a column $s_k$ of $s$, each entry of $s_i s_j s_k$ has modulus value $1$, and the sum of the entries of $s_i s_j s_k$  cannot be $-r$ because $s_{0l}=1$, for all $l$. But the structure constants $N_{ijk}=\dfrac{1}{r}\sum_l s_{li}s_{lj}s_{lk}$ are integers. Therefore, $N_{ijk}$ are either $0$ or $r$. Since  $s_is_j$ is a nonzero vector, it cannot be orthogonal to all the columns of $s$-matrix. Thus $s_is_j=s_k$ for some column $s_k$ of $s$. Therefore, the columns of $s$-matrix are closed under entrywise multiplication. The first column of $s$-matrix serves as an identity of the group of columns of $s$-matrix under entrywise multiplication.

\smallskip

(iii). Let $s$ be a real matrix. Since $|s_{ij}|=1$, $s_{ij}=\pm 1$. Thus, by Part $(ii)$,  $s$ is a group. Hence $s$ is a character table of an elementary abelian group of order $r$.
\end{proof}

If an $s$-matrix  of rank $r$ has unique norm then the Fourier matrix $S= r^{-1/2}s$. Thus the above theorem also classifies the homogeneous Fourier matrices under the same condition.

\end{document}